\documentclass[11.5pt]{article}
\usepackage{amsmath,amsthm}
\usepackage[authoryear]{natbib}
\usepackage{graphicx}
\usepackage{verbatim}
\usepackage{hyperref}
\usepackage{multirow}
\usepackage{times}
\usepackage{color}
\newtheorem{thm}{Theorem} 
 
\newtheorem{cor}[thm]{Corollary}

\newtheorem{definition}{Definition}[section]

\newtheorem{condition-t}{Translated Condition}

\theoremstyle{remark}

\newcommand{\bed}{\begin{definition}}
\newcommand{\eed}{\end{definition}}
\newcommand{\beq}{\begin{equation}}
\newcommand{\eeq}{\end{equation}}

\newcommand{\bitem}{\begin{itemize}}
\newcommand{\eitem}{\end{itemize}}

\newcommand{\beqn}{\begin{equation}}
\newcommand{\eeqn}{\end{equation}}
\newcommand{\balign}{\begin{align}}
\newcommand{\ealign}{\end{align}}

\usepackage{amssymb}

\title{Concentration Inequalities for Incomplete U-statistics over Arbitrary Sampling Graphs}
\author{Zheng Tracy Ke\\Harvard University}

\begin{document} 

\maketitle

\begin{abstract}
Let $X_1, X_2, \ldots, X_n$ be independent random vectors. For a directed graph 
$G=(V,E)$ with vertex set $V=\{1,2,\ldots,n\}$ and a collection of bivariate 
kernels $\{h_e:e\in E\}$, we consider
\[
U=\sum_{e=(i,j)\in E} h_e(X_i,X_j).
\]
This framework generalizes incomplete U-statistics by allowing the random vectors 
to be non-identically distributed, the kernels to be asymmetric and edge-dependent, 
and the sampling structure to be specified by an arbitrary graph.

We derive several concentration inequalities for $U-\mathbb{E}U$. The main proof 
strategy exploits edge-coloring results from graph theory and relates the tail 
behavior of $U$ to the chromatic index of $G$. This approach is elementary, 
transparent, and readily adaptable to broader settings, including U-statistics of 
order $m>2$ and statistics involving doubly indexed random vectors.
\end{abstract}

\section{Introduction} \label{sec:Intro}
Classical U-statistics \citep{hoeffding1948class} play a fundamental role in a wide range of statistical problems. Let $X_1, \ldots, X_n \in \mathbb{R}^p$ be independent and identically distributed (i.i.d.) random vectors and $h:\mathbb{R}^p \times\mathbb{R}^p \to \mathbb{R}$ a symmetric bivariate kernel. An order-$2$ U-statistic is defined as 
\beq \label{classical-U-stat}
U^{*}:=\sum_{1\leq i<j \leq n} h(X_i, X_j).
\eeq
The normalized version, $U^*/\binom{n}{2}$, is commonly used to estimate $\theta := \mathbb{E}[h(X_1, X_2)]$. 

Computing a classical U-statistic requires evaluating the kernel value on $\binom{n}{2}=O(n^2)$ subsets, which can be computationally expensive when $n$ is extremely large. To alleviate this burden, \citet{blom1976incomplete} introduced incomplete U-statistics, which are constructed by randomly subsampling a subset of the pairs appearing in the full U-statistics.

In this paper, we consider a class of generalized incomplete U-statistics. Suppose  $X_1, X_2, \ldots, X_n$  are independent but not necessarily identically distributed. Let $G=(V,E)$ be an undirected graph with the node set $V=\{1,2,\ldots,n\}$, and let $\{h_e\}_{e\in E}$ be a collection of bivariate kernels indexed by the edges in $G$. 
These kernels are not necessarily symmetric, i.e., $h_e(x,y)\neq h_e(y,x)$ is allowed. Define
\beq \label{our-U-stat}
U := \sum_{e=(i,j)\in E}h_e(X_i, X_j). 
\eeq
The classical U-statistics and the incomplete U-statistics are special cases where $G$ is the complete graph and the Erd\H{o}s--R\'enyi random graph, respectively. 
Our setting is considerably more general, allowing $G$ to be an arbitrary deterministic graph.
Moreover, we relax the i.i.d. assumption on the observations $X_i$ and the homogeneity and symmetry assumptions on the kernels.  

We aim to obtain concentration inequalities for $U-\mathbb{E}U$. Such results have potential applications to the analysis of design-based incomplete U-statistics \citep{brown1978reduced, rempala2003incomplete, rempala2004minimum, kong2021design}, covariate-assisted Bradley-Terry models \citep{turner2012bradley,fan2024uncertainty} with arbitrary comparison graphs, and partially observed latent-space networks \citep{fontaine2026missing} with arbitrary missing patters.  

Concentration inequalities for classical U-statistics have been extensively studied in the literature. For instance, \cite{hoeffding1948class} derived a Hoeffding-type inequality and \cite{arcones1995bernstein} derived a Bernstein-type inequality. The analyses rely on a trick nowadays known as the Hoeffding decomposition. Without loss of generality, we assume $n=2m$ is even. Let $S^n$ denote all permutations of $1,2,\ldots,n$. Then, 
\[
U^{*}\Big/\binom{n}{2} =\frac{1}{n!}\sum_{\tau\in S^n}Y_{\tau},  \qquad\mbox{with}\;\;  Y_{\tau}:=\frac{h(X_{\tau_1}, X_{\tau_2})+h(X_{\tau_3}, X_{\tau_4})+\ldots + h(X_{\tau_{2m-1}}, X_{\tau_{2m}})}{m}. 
\] 
Here, each $Y_{\tau}$ is a sum of i.i.d. random variables and thus admits a sharp bound for its moment generating function. The normalized U-statistic is a convex combination of $Y_\tau$. By Jensen's inequality, its moment generating function is upper bounded by the convex combination of the moment generating functions of $Y_{\tau}$. 
Combining this with the Chernoff bound gives the desirable concentration inequalities. 

Unfortunately, the Hoeffding decomposition is not applicable to the statistic defined in \eqref{our-U-stat}. We notice that the high-level idea of the Hoeffding decomposition is to partition all pairs $(i,j)$ into non-overlapping clusters such that within each cluster, the corresponding variables $h(X_i, X_j)$ are independent of each other. 
We also notice that the independence requirement is satisfied when any two pairs $(i,j)$ and $(i', j')$ within a cluster share no common node. Inspired by these observations, we connect our problem to the {\it proper edge coloring} problem in graph theory. 

A proper $K$-edge-coloring for a graph refers to an assignment of $K$ colors to all edges so that any two edges of the same color share no common node. Suppose $G$ has a proper $K$-edge-coloring: $E=\cup_{k=1}^K E_k$. Then, for any nonnegative weights $w_1, \ldots, w_K$ such that $\sum_{k=1}^K w_k=0$, we can decompose
\beq \label{new-decomposition}
U = \sum_{k=1}^K w_k Y_k, \qquad\mbox{with}\quad Y_k = \frac{1}{w_k}\sum_{e=(i,j)\in E_k}h_e(X_i, X_j). 
\eeq
Now, each $Y_k$ is a sum of independent random variables, and $U$ is a convex combination of $Y_k$'s. This is analogous to the Hoeffding decomposition for classical U-statistics but broadly applicable to arbitrary graphs, non i.i.d. observations, and asymetric and heterogenous kernels. Using this decomposition, we derive several concentration inequalities for $U$. For example, when each $h_e(X_i, X_j)$ is sub-Gaussian, we show that the tail behavior of $U-\mathbb{E}U$ is similar to ${\cal N}(0, v)$, with 
\beq  \label{sub-Gaussian-norm}
v = K\sum_{e=(i,j)\in E}\mathrm{Var}(h_e(X_i, X_j)). 
\eeq
The concentration bounds are sharper as $K$ becomes smaller. We want to take the smallest $K$ such that a proper $K$-edge-coloring exists. This number is called the chromatic index of $G$. 
By Vizing's theorem \citep{Vizing1964}, the chromatic index of a graph is alway upper bounded by $d_{\max}+1$, where $d_{\max}$ is the maximum node degree. 
Therefore, we can take $K=d_{\max}+1$ in \eqref{sub-Gaussian-norm}, resulting in simple and transparent concentration inequalities for $U$. In the special case where $G$ is a complete graph, our results recover some known concentration inequalities for classical U-statistics.  

\cite{maurer2022exponential} also studied concentration inequalities for statistics like \eqref{our-U-stat} with a deterministic graph $G$.  However, they assumed that  $X_i$'s are i.i.d. and the kernel is homogenous and bounded ($h_e(x, y)=h(x, y)$, and $|h(x,y)|\leq 1$). 
In particular, boundedness of the kernel plays a crucial role in their analysis. 
The resulting concentration inequalities also have more complex and less transparent forms than ours. 

Our edge-coloring approach to deriving concentration inequalities appears to be conceptually simple, and it yields transparent concentration bounds in which the graph enters only through the quantity $d_{\max}$. We will also show that this technique readily extends to more general settings such as incomplete order-$m$ U-statistics and doubly indexed data vectors (see Section~\ref{sec:Ext}). 


\section{New Concentration Inequalities} \label{sec:Main}

Throughout this article, let $N$ denote the total number of edges in $G$, and $d_{\max}$ the maximum node degree. 
In graph theory, a $K$-edge-coloring is an assignment of edges into one of $K$ colors. A proper $K$-edge-coloring is such that no two incident edges (edges that share a common node) have the same color. 
When there exists a proper $K$-edge-coloring for $G$, we have the following theorem: 

\begin{thm}[Main result] \label{thm:main}
Suppose for every $e\in E$, there exists $\sigma_e>0$ such that 
\beq \label{main-condition}
\mathbb{E}\Bigl[e^{\lambda \{h_e(X_i, X_j)-\mathbb{E}h_e(X_i, X_j)\}}\Bigr]\leq e^{\lambda^2\sigma_e^2/2}, \qquad\mbox{for all $\lambda>0$}. 
\eeq 
If there exists a proper $K$-edge-coloring of $G$, then for all $t>0$, 
\[
\mathbb{P}(U-\mathbb{E}U>t)\leq \exp\biggl\{ - \frac{t^2}{2K\sum_{e\in E}\sigma_e^2}\biggr\}. 
\]
\end{thm}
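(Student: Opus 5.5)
We will use the Chernoff bound together with the edge-coloring decomposition \eqref{new-decomposition}. Fix a proper $K$-edge-coloring $E=\cup_{k=1}^K E_k$ (empty color classes can be discarded). Write $Z_e := h_e(X_i,X_j)-\mathbb{E}h_e(X_i,X_j)$ for $e=(i,j)$ and $s_k^2:=\sum_{e\in E_k}\sigma_e^2$. Choose the weights $w_k := s_k/\sum_{l=1}^K s_l$. These are positive and satisfy $\sum_k w_k=1$; this normalization is what the decomposition \eqref{new-decomposition} needs. Set $Y_k := w_k^{-1}\sum_{e\in E_k} Z_e$, so that $U-\mathbb{E}U=\sum_k w_k Y_k$ is a convex combination.

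\textbf{Step 1 (MGF of each color class).} Edges in $E_k$ are pairwise non-incident, so the pairs $(X_i,X_j)$, $e=(i,j)\in E_k$, involve disjoint sets of the independent vectors. Hence the $Z_e$, $e\in E_k$, are independent. By \eqref{main-condition}, for $\lambda>0$,
\[
\mathbb{E}e^{\lambda Y_k}=\prod_{e\in E_k}\mathbb{E}e^{(\lambda/w_k)Z_e}\le \exp\Bigl\{\frac{\lambda^2 s_k^2}{2w_k^2}\Bigr\}.
\]

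\textbf{Step 2 (Jensen).} Since $x\mapsto e^{\lambda x}$ is convex,
\[
\mathbb{E}e^{\lambda(U-\mathbb{E}U)}\le\sum_k w_k\,\mathbb{E}e^{\lambda Y_k}.
\]
With our choice of weights, $s_k/w_k=\sum_l s_l$ for every $k$. The bound is therefore $\exp\{\lambda^2(\sum_l s_l)^2/2\}$. Cauchy--Schwarz gives $(\sum_{l=1}^K s_l)^2\le K\sum_l s_l^2=K\sum_{e\in E}\sigma_e^2$. (If some class is empty, the number of nonempty classes is at most $K$, so the bound still holds.)

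\textbf{Step 3 (Chernoff).} We have
\[
\mathbb{P}(U-\mathbb{E}U>t)\le \exp\Bigl\{-\lambda t+\lambda^2K\sum_e\sigma_e^2/2\Bigr\}.
\]
Optimizing at $\lambda=t/(K\sum_e\sigma_e^2)$ gives the claimed bound.

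The only delicate point is the choice of weights. Naive equal weights $w_k=1/K$ give $\max_k K^2 s_k^2$ instead of $K\sum_k s_k^2$, which is too weak when the color classes are unbalanced. Choosing $w_k\propto s_k$ equalizes the per-class exponents and lets Cauchy--Schwarz give exactly the factor $K$. The independence claim in Step 1 needs the vectors $X_i$ to be mutually independent. A self-loop edge $(i,i)$, if allowed, is still a single term and causes no issue.
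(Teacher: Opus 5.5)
Your proposal is correct and follows the paper's proof essentially step for step: the same weights $w_k\propto\bigl(\sum_{e\in E_k}\sigma_e^2\bigr)^{1/2}$, independence within each color class, Jensen's inequality, Cauchy--Schwarz to get the factor $K$, and the Chernoff bound at $\lambda=t/(K\sum_e\sigma_e^2)$. Your explicit handling of empty color classes is a small tidying-up that the paper leaves implicit.
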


\begin{proof}
Let $E=\cup_{k=1}^K E_k$ be a proper $K$-edge-coloring for $G$. Then, we can write 
\[
U=  \sum_{k=1}^K w_k Y_k, \quad\mbox{where}\;\; Y_k = \frac{1}{w_k}\sum_{e=(i,j)\in E_k}h_e(X_i, X_j), \mbox{ and } w_k = \frac{\sqrt{\sum_{e\in E_k}\sigma^2_e}}{\sum_{k=1}^K\sqrt{\sum_{e\in E_\ell}\sigma^2_e}}. 
\]
By the definition of proper edge coloring, any two edges in $E_k$ share no common node. 
Therefore, $Y_k$ is a sum of $|E_k|$ independent random variables. It follows that for all $\lambda>0$, 
\begin{align*}
\mathbb{E}\Bigl[ &e^{\lambda(Y_k-\mathbb{E}Y_k)}\Bigr]  = \prod_{e=(i,j)\in E_k}\mathbb{E}\Bigl[ e^{\frac{\lambda}{w_k}\{h_e(X_i, X_j)-\mathbb{E}h_e(X_i, X_j)\}}\Bigr] \cr
& \leq \exp\biggl\{ \frac{\lambda^2}{2w_k^2} \sum_{e\in E_k}\sigma_e^2  \biggr\} = \exp\Biggl\{\frac{ \lambda^2}{2} \Biggl(\sum_{\ell=1}^K \sqrt{\sum_{e\in E_\ell}\sigma_e^2}\Biggr)^2 \Biggr\}.
\end{align*}
Using the Cauchy-Schwarz inequality, we further obtain:
\beq \label{main-proof-1}
\mathbb{E}\Bigl[ e^{\lambda(Y_k-\mathbb{E}Y_k)}\Bigr] \leq \exp\biggl\{\frac{\lambda^2}{2}K\sum_{\ell=1}^K\Bigl(\sum_{e\in E_\ell}\sigma_e^2\Bigr)\biggr\} =\exp\biggl\{\frac{\lambda^2K}{2}\sum_{e\in E}\sigma_e^2\biggr\}. 
\eeq
We now apply the Chernoff bound and utilize \eqref{main-proof-1}. It yields that
\begin{align*}
\mathbb{P}(U-\mathbb{E}U>t)&\leq e^{-\lambda t} \, \mathbb{E}\bigl[e^{\lambda(U-\mathbb{E}U)}\bigr] \qquad \mbox{(Chernoff bound)}\cr
&= e^{-\lambda t}\, \mathbb{E}\Bigl[ e^{ \sum_{k=1}^K w_k \lambda (Y_k-\mathbb{E}Y_k)}\Bigr]\cr
&\leq e^{-\lambda t }\sum_{k=1}^K w_k \, \mathbb{E}\bigl[e^{\lambda (Y_k-\mathbb{E}Y_k)}\bigr] \qquad \mbox{(Jensen's inequality)}\cr
& \leq  e^{-\lambda t} \exp\biggl\{\frac{\lambda^2K}{2}\sum_{e\in E}\sigma_e^2\biggr\} \qquad \mbox{(using the inequality \eqref{main-proof-1})}\cr
&= \exp\biggl\{ - \frac{t^2}{2K \sum_{e\in E}\sigma_e^2}\biggr\}, \qquad\mbox{if we take } \lambda=\frac{t}{K\sum_{e\in E}\sigma_e^2}. 
\end{align*}
This gives the desirable claim. 
\end{proof}

The bound in Theorem~\ref{thm:main} holds for every integer $K$ such that 
$G$ admits a proper $K$-edge-coloring. We hope to make $K$ as small as possible. The smallest admissible 
value of $K$ is the chromatic index of 
$G$, denoted by $\chi'(G)$. The Vizing's theorem \citep{Vizing1964} states that for any undirected graph,
\beq \label{Vizing}
d_{\max}\leq \chi'(G)\leq d_{\max}+1, \qquad\mbox{where $d_{\max}$ is the maximum degree of $G$}. 
\eeq
This partitions undirected graphs into two classes: ``class one" graphs for which $d_{\max}$ colors are sufficient for proper edge coloring, and ``class two" graphs for which $d_{\max}+1$ colors are necessary. For our purpose, we use $d_{\max}+1$ as a universal upper bound for the chromatic index. 
It leads to the following corollary: 

\begin{cor}[Sub-Gaussian type inequality] \label{cor:d-max-version}
Suppose \eqref{main-condition} is satisfied, and let $\sigma^2=N^{-1}\sum_{e\in E}\sigma^2_e$. Then, for all $t>0$, 
\[
\mathbb{P}\biggl(\frac{U-\mathbb{E}U}{N}>t\biggr)\leq \exp\biggl\{ - \frac{Nt^2}{2(d_{\max}+1)\sigma^2}\biggr\}.
\]
\end{cor}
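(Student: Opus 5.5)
The corollary follows by combining Theorem~\ref{thm:main} with Vizing's bound \eqref{Vizing} and then rescaling. The plan has three steps.

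\emph{Existence of a coloring.} By \eqref{Vizing}, $\chi'(G)\le d_{\max}+1$. So $G$ admits a proper $(d_{\max}+1)$-edge-coloring; if fewer colors suffice, we simply leave some color classes empty. The proof of Theorem~\ref{thm:main} uses the classes $E_k$ only through the sums $\sum_{e\in E_k}\sigma_e^2$, and an empty class contributes $\sqrt{0}=0$ to the weight normalization. Hence we may take $K=d_{\max}+1$ in Theorem~\ref{thm:main}.

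\emph{Orientation of edges.} The graph in the setup is directed, while \eqref{Vizing} is stated for undirected graphs. Edge coloring only concerns which nodes an edge touches, so we apply Vizing to the underlying undirected multigraph. One point needs care. If both $(i,j)$ and $(j,i)$ belong to $E$, the underlying graph has parallel edges, and for multigraphs Vizing's bound becomes $d_{\max}+\mu$. I would handle this in one of two ways. The simplest is to assume, as the setup implicitly does, that $E$ contains no such antiparallel pair. Alternatively, merge each antiparallel pair into a single edge with kernel $h_{(i,j)}(x,y)+h_{(j,i)}(y,x)$; its centered MGF is controlled by $(\sigma_{(i,j)}+\sigma_{(j,i)})^2\le 2(\sigma_{(i,j)}^2+\sigma_{(j,i)}^2)$, but this costs a constant factor. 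I expect this bookkeeping to be the only real subtlety. Under the simple-graph reading, the step is immediate.

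\emph{Substitution and rescaling.} Theorem~\ref{thm:main} with $K=d_{\max}+1$ and $\sum_{e\in E}\sigma_e^2=N\sigma^2$ gives
\[
\mathbb{P}(U-\mathbb{E}U>s)\le \exp\Bigl\{-\frac{s^2}{2(d_{\max}+1)N\sigma^2}\Bigr\}.
\]
Setting $s=Nt$, the exponent becomes $-N^2t^2/\{2(d_{\max}+1)N\sigma^2\}=-Nt^2/\{2(d_{\max}+1)\sigma^2\}$, which is the claimed bound.
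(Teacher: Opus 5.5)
Your proposal is correct and follows the paper's route exactly: take $K=d_{\max}+1$ in Theorem~\ref{thm:main} via Vizing's bound \eqref{Vizing}, use $\sum_{e\in E}\sigma_e^2=N\sigma^2$, and replace $t$ by $Nt$. Your extra care goes beyond the paper, which simply treats $G$ as a simple undirected graph (the reading under which your step is immediate); also, the empty-class issue is avoided more cleanly by noting that the bound in Theorem~\ref{thm:main} is increasing in $K$, so applying it with $K=\chi'(G)\le d_{\max}+1$ already suffices.
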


We consider two special graphs. The first is the complete graph, where $U=U^{\text{\text{full}}}= \sum_{i<j}h_e(X_i, X_j)$. It further reduces to the classical U-statistics when the kernel is homogeneous and symmetric and when $X_i$'s have identical distributions. Since $N=\frac{n(n-1)}{2}$ and $d_{\max}=n-1$, we have the following corollary: 
\begin{cor}[Complete graph] \label{cor:full-graph}
Suppose \eqref{main-condition} is satisfied, and let $\sigma^2=\frac{2}{n(n-1)}\sum_{1\leq i<j\leq n}\mathrm{Var}(h_e(X_i, X_j))$. Consider $U^{\text{\text{full}}}= \sum_{1\leq i<j\leq n}h_e(X_i, X_j)$. Then, for all $t>0$,
\[
\mathbb{P}\biggl(\frac{U^{\text{full}}-\mathbb{E}U^{\text{full}}}{\frac{n(n-1)}{2}}>t\biggr)\leq \exp\biggl\{ - \frac{(n-1)t^2}{4\sigma^2}\biggr\}.
\]
\end{cor}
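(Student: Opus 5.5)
The plan is to obtain this as a direct specialization of Corollary~\ref{cor:d-max-version}, or equivalently of Theorem~\ref{thm:main} with an explicit coloring. For the complete graph on $\{1,\ldots,n\}$ we have $N=\binom{n}{2}=\frac{n(n-1)}{2}$ and $d_{\max}=n-1$, so $d_{\max}+1=n$. Substituting into Corollary~\ref{cor:d-max-version} gives
\[
\exp\biggl\{-\frac{N t^2}{2n\sigma^2}\biggr\}=\exp\biggl\{-\frac{n(n-1)t^2}{4n\sigma^2}\biggr\}=\exp\biggl\{-\frac{(n-1)t^2}{4\sigma^2}\biggr\}.
\]
Here $\sigma^2$ is the average over the $N$ edges of the edge-wise parameters, which is exactly the normalization $\frac{2}{n(n-1)}\sum_{i<j}$ in the statement.

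To keep the argument self-contained and avoid invoking Vizing's theorem, I would exhibit a proper $n$-edge-coloring of $K_n$ explicitly. Color the edge $\{i,j\}$ by $(i+j)\bmod n$. Suppose two edges $\{i,j\}$ and $\{i,j'\}$ share the endpoint $i$ and receive the same color. Then $j\equiv j'\pmod n$, and since both lie in $\{1,\ldots,n\}$, this forces $j=j'$. So this is a proper coloring with at most $n$ colors, and Theorem~\ref{thm:main} applies with $K=n$. For even $n$ one could even use $n-1$ colors via the round-robin tournament schedule, which gives the slightly sharper exponent $\frac{n t^2}{4\sigma^2}$. I will only record this as a remark, since the stated bound needs just $K\le n$.

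The one point requiring care, and the main obstacle, is the identification of $\sigma^2$. Condition~\eqref{main-condition} is stated with sub-Gaussian proxies $\sigma_e^2$, whereas the corollary writes $\sigma^2$ as an average of variances. In general $\mathrm{Var}(h_e(X_i,X_j))\le\sigma_e^2$, with equality when $\sigma_e^2$ is the optimal proxy, as in the Gaussian case. So the inequality is rigorous when $\mathrm{Var}(h_e)$ is read as the proxy $\sigma_e^2$ appearing in \eqref{main-condition}, following the paper's convention from \eqref{sub-Gaussian-norm}. I would state the proof under that reading: set $\sigma^2=N^{-1}\sum_{e}\sigma_e^2$, invoke Theorem~\ref{thm:main} with $K=n$ and threshold $Nt$, and simplify. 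I would also add a sentence noting that, without this convention, the bound holds with the variances replaced by the proxies $\sigma_e^2$.
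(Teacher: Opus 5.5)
Your proposal is correct and matches the paper's argument: the paper proves this corollary simply by plugging $N=\frac{n(n-1)}{2}$ and $d_{\max}=n-1$ into Corollary~\ref{cor:d-max-version}, and your explicit coloring $\{i,j\}\mapsto (i+j)\bmod n$ is a harmless, self-contained substitute for invoking Vizing's theorem. Your caveat about $\sigma^2$ is also right: condition \eqref{main-condition} only gives $\mathrm{Var}(h_e(X_i,X_j))\le\sigma_e^2$, so with literal variances the bound does not follow, and it can fail (for example, a single edge whose kernel is a Bernoulli$(p)$ variable with small $p$). The bound is justified when $\sigma^2$ is read as the average of the proxies $\sigma_e^2$, exactly as in Corollary~\ref{cor:d-max-version}.
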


Corollary~\ref{cor:full-graph} provides a sub-Gaussian-counterpart for some well-known concentration inequalities for classical U-statistics \citep{hoeffding1948class, arcones1995bernstein, pitcan2017note}. Meanwhile, Corollary~3 is more general, allowing for non i.i.d. observations as well as heterogeneous and asymmetric kernels.


The second special case is when $G$ is an Erd\H{o}s--R\'enyi random graph with edge probability $q_n\in (0,1)$. Then, $U$ can be regarded as a variant of the incomplete U-statistics \citep{blom1976incomplete}, allowing heterogeneous and asymmetric kernels and non-identically distributed $X_i$. 
We note that the incomplete U-statistics are used to estimate $\mathbb{E}[h(X_1, X_2)]$, which is also equal to $N^{-1}\mathbb{E}[U|G]$, regardless of the realization of $G$. 
Inspired by this observation, we are interested in the following quantity:
\[
\delta(G):= \frac{U-\mathbb{E}[U|G]}{N}, \qquad\mbox{where $N=N(G)$ is the total number of edges in $G$}. 
\]
By Corollary~\ref{cor:d-max-version}, conditioning on $G$, $
\delta(G) \leq  N^{-\frac12}\sigma \sqrt{2(d_{\max}+1)\log(n)}$, with probability $1-n^{-1}$. 
Since the upper bound for $\delta(G)$ only depends on $N$ and $d_{\max}$, we then study these two quantities under an Erd\H{o}s--R\'enyi model. 
Note that $N=\frac{1}{2}\sum_{1\leq i\leq n}d_i$ and $d_{\max}=\max_{1\leq i\leq n}d_i$, where $d_i$ is the degree of node $i$. 
The most interesting asymptotic regime is when $\omega_n:=nq_n/\log(n)\to\infty$. By Corollary 33.7 of \cite{frieze2015introduction} (see also Theorem 3.4 there), 
\[
\mathbb{P}\biggl(\cup_{1\leq i\leq n}\Bigl\{|d_i- nq_n|> \omega_n^{-1/3}nq_n\Bigr\} \biggr)\leq 2n^{-(1/3) \omega_n^{1/3}} = o(n^{-1}). 
\]
It immediately leads to a high-probability upper bound for $d_{\max}$ and a high-probability lower-bound for $N$. We then have the following corollary: 

\begin{cor}[Erd\H{o}s--R\'enyi graph]
Suppose  \eqref{main-condition} is satisfied and $G$ is an Erd\H{o}s--R\'enyi random graph with edge probability $q_n\in (0,1)$. Suppose $\omega_n: = nq_n/\log(n)\to\infty$. Let $\beta_n = \frac{(1+\omega_n^{-1/3})nq_n+1}{(1-\omega_n^{-1/3})nq_n}$. Then, with probability $1-O(n^{-1})$, 
\[
\delta(G)\leq 2\sigma n^{-\frac12}\sqrt{\beta_n\log(n)}.  
\]
\end{cor}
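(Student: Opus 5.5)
Since $G$ is independent of $X_1,\ldots,X_n$, I would argue conditionally on $G$ and then intersect with a high-probability event on the degrees. Conditioning on a fixed realization of $G$ with $N\geq 1$, Corollary~\ref{cor:d-max-version} applies with the deterministic graph $G$. Taking $t = \sigma\sqrt{2(d_{\max}+1)\log(n)/N}$ gives
\[
\mathbb{P}\Bigl(\delta(G) > \sigma\sqrt{2(d_{\max}+1)\log(n)/N}\,\Big|\,G\Bigr)\leq n^{-1}.
\]
Here $\sigma^2=N^{-1}\sum_{e\in E}\sigma_e^2$ depends on $G$. I will read $\sigma$ in the statement as a uniform bound, for instance $\sigma_e\le\sigma$ for all pairs, so that the conditional bound may use this $\sigma$ regardless of $G$. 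Integrating over $G$ keeps the failure probability at $n^{-1}$.

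Next I would control $d_{\max}$ and $N$ on the event
\[
\mathcal{A}=\bigcap_{i}\bigl\{|d_i-nq_n|\le \omega_n^{-1/3}nq_n\bigr\}.
\]
By the cited result of \cite{frieze2015introduction}, $\mathbb{P}(\mathcal{A}^c)=o(n^{-1})$. On $\mathcal{A}$ we have
\[
d_{\max}+1\le (1+\omega_n^{-1/3})nq_n+1
\qquad\text{and}\qquad
N=\tfrac12\sum_i d_i\ge \tfrac n2(1-\omega_n^{-1/3})nq_n.
\]
In particular $N\geq1$ for large $n$, since $\omega_n\to\infty$. Hence
\[
\frac{2(d_{\max}+1)}{N}\le \frac{4}{n}\cdot\frac{(1+\omega_n^{-1/3})nq_n+1}{(1-\omega_n^{-1/3})nq_n}=\frac{4\beta_n}{n}.
\]
Plugging this in, the threshold becomes $\sigma\sqrt{4\beta_n\log(n)/n}=2\sigma n^{-1/2}\sqrt{\beta_n\log n}$.

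Finally, a union bound gives
\[
\mathbb{P}\bigl(\delta(G)>2\sigma n^{-1/2}\sqrt{\beta_n\log n}\bigr)\le \mathbb{P}(\mathcal{A}^c)+\mathbb{E}\bigl[\mathbb{P}(\cdots\mid G)\mathbf 1_{\mathcal{A}}\bigr]\le o(n^{-1})+n^{-1}=O(n^{-1}).
\]
The main obstacle is bookkeeping rather than analysis. I need to justify that the conditional application of Corollary~\ref{cor:d-max-version} is legitimate, which uses the independence of $G$ from the $X_i$ and the fact that conditioning on $G$ makes the graph deterministic. I also need $\sigma$ not to depend on $G$, or at least to be bounded uniformly. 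If $\sigma$ is instead the graph-dependent average, the statement should be read with that $\sigma$ and the same argument goes through verbatim conditionally. The degenerate case $N=0$ is excluded on $\mathcal{A}$.
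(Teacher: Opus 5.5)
Your proposal is correct and follows the paper's argument. You apply Corollary~\ref{cor:d-max-version} conditionally on $G$ with $t=\sigma\sqrt{2(d_{\max}+1)\log(n)/N}$, then use the cited degree concentration to get $2(d_{\max}+1)/N\le 4\beta_n/n$ outside an event of probability $o(n^{-1})$. Your explicit handling of the independence of $G$ from the $X_i$, the $G$-dependence of $\sigma$, and $N\geq 1$ supplies bookkeeping the paper leaves implicit.
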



The above results are based on the sub-Gaussian assumption in \eqref{main-condition}. 
We now consider the cases where $h_e(X_i, X_j)$ are bounded, and
develop a Hoeffding-type inequality and a Bernstein-type inequality. 

First, suppose $a_e\leq h_e(X_i, X_j)\leq b_e$. The Hoeffding's lemma implies $\mathbb{E}[e^{\lambda\{h_e(X_i, X_j)-\mathbb{E}h_e(X_i, X_j)\}}]\leq e^{\lambda^2(b_e-a_e)^2/8}$, which means \eqref{main-condition} holds with $\sigma_e^2=(b_e-a_e)^2/4$. Combining this observation with Corollary~\ref{cor:d-max-version} yields the following result: 

\begin{cor}[Hoeffding-type]
Suppose $a_e\leq h_e(X_i, X_j)\leq b_e$ for all $e\in E$. For all $t>0$,
\[
\mathbb{P}\biggl(\frac{U-\mathbb{E}U}{N}>t\biggr)\leq \exp\biggl\{ - \frac{2N^2t^2}{(d_{\max}+1)\sum_{e\in E}(b_e-a_e)^2}\biggr\}.
\]
\end{cor}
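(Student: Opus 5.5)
The plan is to deduce the Hoeffding-type inequality directly from Corollary~\ref{cor:d-max-version}, which rests on Theorem~\ref{thm:main} with $K=d_{\max}+1$ supplied by Vizing's bound \eqref{Vizing}. The only thing to check is the sub-Gaussian condition \eqref{main-condition}, and Hoeffding's lemma gives it for each edge.

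\emph{Step 1: verify \eqref{main-condition}.} Fix $e=(i,j)\in E$. The variable $Z_e:=h_e(X_i,X_j)$ lies in $[a_e,b_e]$. Hoeffding's lemma, applied to the centered variable $Z_e-\mathbb{E}Z_e$, gives
\[
\mathbb{E}\bigl[e^{\lambda(Z_e-\mathbb{E}Z_e)}\bigr]\leq e^{\lambda^2(b_e-a_e)^2/8}\qquad\text{for all real }\lambda,
\]
and in particular for all $\lambda>0$. This is \eqref{main-condition} with $\sigma_e^2=(b_e-a_e)^2/4$. No independence across edges is needed here, since the condition is imposed edge by edge.

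\emph{Step 2: apply Corollary~\ref{cor:d-max-version}.} Set $\sigma^2=N^{-1}\sum_{e\in E}\sigma_e^2=(4N)^{-1}\sum_{e\in E}(b_e-a_e)^2$. The corollary then gives
\[
\mathbb{P}\Bigl(\tfrac{U-\mathbb{E}U}{N}>t\Bigr)\leq \exp\Bigl\{-\tfrac{Nt^2}{2(d_{\max}+1)\sigma^2}\Bigr\}.
\]

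\emph{Step 3: simplify the exponent.} Substituting $\sigma^2$ gives
\[
\frac{Nt^2}{2(d_{\max}+1)\sigma^2}=\frac{4N^2t^2}{2(d_{\max}+1)\sum_{e\in E}(b_e-a_e)^2}=\frac{2N^2t^2}{(d_{\max}+1)\sum_{e\in E}(b_e-a_e)^2},
\]
which is exactly the stated bound.

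There is no genuine obstacle. The only point needing care is that Corollary~\ref{cor:d-max-version} (through Theorem~\ref{thm:main}) requires $\sigma_e>0$. If some edge has $a_e=b_e$, then $h_e(X_i,X_j)$ is almost surely constant and contributes nothing to $U-\mathbb{E}U$. I would handle this by replacing $\sigma_e$ with an arbitrary $\epsilon>0$ on such edges, applying the bound, and letting $\epsilon\to0$; by continuity of the right-hand side this recovers the stated inequality. Alternatively, the proof of Theorem~\ref{thm:main} goes through verbatim with $\sigma_e=0$, as long as not all $\sigma_e$ vanish; if they all vanish, $U=\mathbb{E}U$ almost surely and the claim is trivial.
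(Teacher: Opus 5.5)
Your proposal is correct and is exactly the paper's argument: Hoeffding's lemma gives \eqref{main-condition} with $\sigma_e^2=(b_e-a_e)^2/4$, and substituting this into Corollary~\ref{cor:d-max-version} produces the stated exponent. Your extra remark about edges with $a_e=b_e$, where the requirement $\sigma_e>0$ fails, is a harmless refinement that the paper does not address.
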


Next, we derive a Bernstein-type inequality. The concentration inequalities established above require only the existence of a proper edge coloring. To obtain a Bernstein-type inequality, however, we additionally need a global size-balance property of edge colorings \citep{FolkmanFulkerson1969,asratian2000some}. 
Let $N_1, N_2, \ldots, N_K$ be the total number of edges assigned to each color. It is known that (e.g., see Corollary 1.2 of \cite{asratian2000some}) for all $K\geq \chi'(G)$, there exists a proper $K$-edge-coloring such that 
\beq \label{global-size-balance}
\max_{1\leq k\neq \ell\leq K}|N_k-N_\ell|\leq 1 \qquad \mbox{(i.e., the color class sizes only differ by at most 1)}. 
\eeq
We leverage \eqref{global-size-balance} to obtain to the following theorem:

\begin{thm}[Berstein-type]
Suppose $|h_e(X_i, X_j)-\mathbb{E}h_e(X_i, X_j)| \leq b$ and $\mathrm{Var}(h_e(X_i, X_j))\leq \sigma^2$ for all $e\in E$. Let $N_*=K\lceil N/K\rceil$. Then, for all $t>0$,
\[
\mathbb{P}\Bigl(\frac{U-\mathbb{E}U}{N_*}>t\Bigr)\leq  \exp\biggl\{ -\frac{N_*t^2}{2(d_{\max}+1)(\sigma^2 +bt/3)}\biggr\}.
\]
\end{thm}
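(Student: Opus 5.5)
We follow the proof of Theorem~\ref{thm:main}, replacing the sub-Gaussian moment bound by Bernstein's moment generating function bound. The balanced coloring \eqref{global-size-balance} is what makes the per-color bounds uniform.

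\textbf{Setup.} Take $K=d_{\max}+1$; by Vizing's theorem \eqref{Vizing}, $K\ge\chi'(G)$. By \eqref{global-size-balance} there is a proper $K$-edge-coloring $E=\cup_{k=1}^K E_k$ with $N_k=|E_k|\le\lceil N/K\rceil=:M$ for every $k$, so that $N_*=KM$. Take equal weights $w_k=1/K$ and write
\[
\frac{U-\mathbb{E}U}{N_*}=\frac1K\sum_{k=1}^K Y_k,\qquad Y_k=\frac1M\sum_{e=(i,j)\in E_k}\bigl\{h_e(X_i,X_j)-\mathbb{E}h_e(X_i,X_j)\bigr\}.
\]
Edges within $E_k$ are vertex-disjoint, so $Y_k$ is $1/M$ times a sum of $N_k$ independent, centered variables, each bounded by $b$ with variance at most $\sigma^2$.

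\textbf{Per-color bound.} Bernstein's lemma gives, for $0<s<3/b$, $\mathbb{E}e^{sZ}\le\exp\{\frac{s^2\sigma^2}{2(1-bs/3)}\}$ for each such summand $Z$. Hence, for $0<\lambda<3M/b$,
\[
\mathbb{E}e^{\lambda Y_k}\le\exp\Bigl\{\frac{N_k\lambda^2\sigma^2/M^2}{2(1-b\lambda/(3M))}\Bigr\}\le\exp\Bigl\{\frac{\lambda^2\sigma^2/M}{2(1-b\lambda/(3M))}\Bigr\},
\]
using $N_k\le M$. This bound does not depend on $k$, and that is exactly why the balanced coloring is needed: with unbalanced classes the bound would involve $\max_k N_k$ rather than $N/K$.

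\textbf{Combining the colors.} Apply Chernoff to $\frac1K\sum_k Y_k$ with parameter $\mu$. By Jensen's inequality (convexity of $\exp$),
\[
\mathbb{E}e^{\mu\frac1K\sum_k Y_k}\le\frac1K\sum_k\mathbb{E}e^{\mu Y_k},
\]
and the right side is at most the common per-color bound. Therefore
\[
\mathbb{P}\Bigl(\frac1K\sum_k Y_k>t\Bigr)\le\exp\Bigl\{-\mu t+\frac{\mu^2\sigma^2/M}{2(1-b\mu/(3M))}\Bigr\}.
\]
Setting $\mu=M\lambda'$, the exponent becomes $M\{-\lambda' t+\frac{\lambda'^2\sigma^2}{2(1-b\lambda'/3)}\}$. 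The standard Bernstein optimization, with $\lambda'=t/(\sigma^2+bt/3)$, yields the bound
\[
\exp\Bigl\{-\frac{Mt^2}{2(\sigma^2+bt/3)}\Bigr\}.
\]
Since $M=N_*/K=N_*/(d_{\max}+1)$, this is exactly the claimed inequality.

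\textbf{Main obstacle.} The only non-routine step is the uniformity across colors. Normalizing by $N_*$ rather than $N$, and using the balanced coloring of \citep{asratian2000some}, lets a single $M$ control every class; the remaining steps are standard bookkeeping of the Bernstein moment generating function.
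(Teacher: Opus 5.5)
Your proof is correct and follows essentially the same route as the paper: take $K=d_{\max}+1$, use a balanced proper coloring so every class has at most $M=\lceil N/K\rceil$ edges, write the centered statistic as an equal-weight average over color classes, and combine independence within classes with Jensen and Chernoff. The only difference is cosmetic, and both routes yield the identical final bound. You use the Bernstein-form moment bound $\exp\{s^2\sigma^2/(2(1-bs/3))\}$ with the explicit choice $\lambda'=t/(\sigma^2+bt/3)$. The paper instead rescales to $b=1$, uses Bennett's bound $e^{\sigma^2\phi(\lambda)}$, optimizes exactly, and then applies $(1+x)\log(1+x)-x\ge x^2/(2(1+x/3))$.
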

\begin{proof}
We assume $b=1$ without loss of generality (otherwise we can re-scale $h_e(\cdot,\cdot)$ to $b^{-1}h_e(\cdot,\cdot)$ and replace $(\sigma^2, t)$ by $(\sigma^2/b^2, t/b)$, so that it reduces to the case of $b=1$). 

Let $K=d_{\max}+1$. By \eqref{Vizing}, $K\geq \chi'(G)$. It follows from \eqref{global-size-balance} that there exists a proper $K$-edge-coloring, $E=\cup_{k=1}^K E_k$, such that $\max_{k\neq \ell}||E_k|-|E_\ell||\leq 1$. As a result, 
\[
\max_k |E_k|\leq M:= \lceil N/K\rceil.
\]
Write $U=\sum_{k=1}^K\frac{1}{K}Y_k$, where $Y_k=K\sum_{e=(i,j)\in E_k}h_e(X_i, X_j)$. Note that $Y_k$ is a sum of independent random variables. For any random variables $Z$ such that $|Z|\leq 1$ and $\mathrm{Var}(Z)\leq \sigma^2$, it is well-known that
\[
\mathbb{E}\Bigl[e^{\lambda (Z-\mathbb{E}Z)}\Bigr]\leq e^{\sigma^2\phi(\lambda)}, \qquad\mbox{where}\quad \phi(\lambda)=e^{\lambda}-\lambda-1. 
\]
As a result, 
\[
\mathbb{E}\Bigl[e^{\lambda(Y_k-\mathbb{E}Y_k)}\Bigr]=\prod_{e\in E_k} \mathbb{E}\Bigl[e^{\lambda K \{h_e(X_i, Y_i)-\mathbb{E}h_e(X_i, Y_i)\}}\Bigr]\leq \prod_{e\in E_k} e^{\sigma^2\phi(\lambda K)} \leq e^{M \sigma^2\phi(\lambda K)}. 
\]
Using the Chernoff bound, we obtain:
\begin{align*}
\mathbb{P}(U-\mathbb{E}U>t)&\leq e^{-\lambda t}\mathbb{E}\bigl[e^{\lambda (U-\mathbb{E}U)}\bigr] = e^{-\lambda t}\mathbb{E}\Bigl[e^{\sum_{k=1}^K \frac{1}{K}\lambda (Y_k-\mathbb{E}Y_k)}\Bigr]\cr
&\leq e^{-\lambda t}\frac{1}{K}\sum_{k=1}^K \mathbb{E}\bigl[e^{\lambda(Y_k-\mathbb{E}Y_k)}\bigr] \qquad\mbox{(by Jensen's inequality)}\cr
&\leq e^{-\lambda t + M\sigma^2\phi(\lambda K)}. 
\end{align*}
Let $F_t(\lambda)=-\lambda t + M\sigma^2\phi(\lambda K)$. Letting $F_t'(\lambda)=0$ gives $\lambda= \frac{1}{K}\log(1+\frac{t}{MK\sigma^2})$. With this choice of $\lambda$, the above bound becomes
\[
\mathbb{P}(U-\mathbb{E}U>t) \leq \exp\biggl\{ -M\sigma^2\Bigl[ \Bigl(1+\frac{t}{MK\sigma^2}\Bigr)\log\Bigl(1+\frac{t}{MK\sigma^2}\Bigr)-\frac{t}{MK\sigma^2}\Bigr]\biggr\}. 
\]
It is known that $(1+x)\log(1+x)-x\geq \frac{x^2}{2(1+x/3)}$. As a result, 
\[
\mathbb{P}(U-\mathbb{E}U>t) \leq \exp\biggl\{ -\frac{t^2}{2K(MK\sigma^2+t/3)}\biggr\}\leq \exp\biggl\{ -\frac{t^2}{2K(N_*\sigma^2 +t/3)}\biggr\}. 
\]
It follows immediately that
\[
\mathbb{P}\Bigl(\frac{U-\mathbb{E}U}{N_*}>t\Bigr)\leq  \exp\biggl\{ -\frac{N_*t^2}{2K(\sigma^2 +t/3)}\biggr\}.
\]
This proves the claim for $b=1$. 
\end{proof}

\section{Extensions} \label{sec:Ext}
Our proof strategy is readily adaptable to more general settings. To illustrate its versatility, we present two such extensions in this section: one concerning order-$m$ incomplete U-statistics and the other concerning doubly indexed observations.

\subsection{Order-$m$ incomplete U-statistics} \label{subsec:tensor}
Fix $m\geq 2$. We consider an $m$-uniform hypergraph $H=(V,E)$, where the node set is $V=\{1,2,\ldots,n\}$, and each hyper-edge $e\in E$ is a subset of $m$ nodes. Let $\{h_e\}_{e\in E}$ be a collection of $m$-variate kernels indexed by the edges in $H$. Same as before, let $X_1, X_2, \ldots, X_n\in\mathbb{R}^p$ be independent but not necessarily identically distributed random vectors. Define
\beq \label{U-order-m}
U = \sum_{e=(i_1, i_2,\ldots, i_m)\in E}h_e(X_{i_1}, X_{i_2}, \ldots, X_{i_m}). 
\eeq

A proper $K$-edge-coloring of $H$ is a partition of hyper-edges into $K$ non-overlapping groups such that any two hyper-edges within the same group share no common node. The smallest $K$ such that proper $K$-edge-coloring exists is called the chromatic index of $H$ and denoted by $\chi'(H)$. 

The proof of the following theorem is similar to that of Theorem~\ref{thm:main} and thus omitted. 
\begin{thm}[General $m$]\label{thm:main-order-m}
Suppose for every $e=(i_1, i_2,\ldots,i_m)\in E$, there exists $\sigma_e>0$ such that 
\beq \label{main-cond-order-m}
\mathbb{E}\Bigl[e^{\lambda \{h_e(X_{i_1}, \ldots, X_{i_m})-\mathbb{E}h_e(X_{i_1}, \ldots, X_{i_m})\}}\Bigr]\leq e^{\lambda^2\sigma_e^2/2}, \qquad\mbox{for all $\lambda>0$}. 
\eeq
If there exists a proper $K$-edge-coloring of $G$, then for all $t>0$, 
\[
\mathbb{P}(U-\mathbb{E}U>t)\leq \exp\biggl\{ - \frac{t^2}{2K\sum_{e\in E}\sigma_e^2}\biggr\}. 
\]
\end{thm}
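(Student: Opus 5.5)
The plan is to mirror the proof of Theorem~\ref{thm:main}, replacing the graph by the hypergraph $H$ (the statement says ``coloring of $G$''; I read this as $H$). Fix a proper $K$-edge-coloring $E=\cup_{k=1}^K E_k$ of $H$. For each class set $s_k:=\bigl(\sum_{e\in E_k}\sigma_e^2\bigr)^{1/2}$ and
\[
w_k = \frac{s_k}{\sum_{\ell=1}^K s_\ell},\qquad Y_k=\frac{1}{w_k}\sum_{e=(i_1,\ldots,i_m)\in E_k} h_e(X_{i_1},\ldots,X_{i_m}).
\]
Then $U=\sum_k w_kY_k$ with $w_k\geq 0$ and $\sum_k w_k=1$. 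Empty classes, or classes with $s_k=0$, can be discarded first; this only lowers $K$ and so only improves the bound. The key structural fact is that hyperedges in one color class are pairwise disjoint as node subsets. The summands in $Y_k$ are therefore measurable functions of disjoint blocks of the independent vectors $X_1,\ldots,X_n$, so they are mutually independent.

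Next I bound the moment generating function of each class. By independence and \eqref{main-cond-order-m} applied with parameter $\lambda/w_k>0$,
\[
\mathbb{E}\bigl[e^{\lambda(Y_k-\mathbb{E}Y_k)}\bigr]\leq \exp\Bigl\{\frac{\lambda^2 s_k^2}{2w_k^2}\Bigr\}=\exp\Bigl\{\frac{\lambda^2}{2}\Bigl(\sum_{\ell} s_\ell\Bigr)^2\Bigr\}.
\]
By Cauchy--Schwarz, $(\sum_\ell s_\ell)^2\leq K\sum_\ell s_\ell^2=K\sum_{e\in E}\sigma_e^2$. So every $Y_k$ satisfies the same bound $\exp\{\lambda^2K\sum_e\sigma_e^2/2\}$.

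Finally, I apply the Chernoff bound and then Jensen's inequality. Convexity of $x\mapsto e^{\lambda x}$ gives $e^{\lambda\sum_k w_k(Y_k-\mathbb{E}Y_k)}\leq \sum_k w_k e^{\lambda(Y_k-\mathbb{E}Y_k)}$ pointwise. Taking expectations,
\[
\mathbb{P}(U-\mathbb{E}U>t)\leq \exp\Bigl\{-\lambda t+\frac{\lambda^2 K}{2}\sum_{e\in E}\sigma_e^2\Bigr\}.
\]
Choosing $\lambda=t/(K\sum_e\sigma_e^2)$ gives the claim.

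There is no real obstacle here; the only point needing care is the independence step for order $m$. It relies on disjointness of the whole $m$-sets, which is exactly the definition of a proper hyperedge coloring, and on the grouping lemma for independent random vectors: functions of disjoint sub-collections are independent.
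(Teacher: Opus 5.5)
Your proposal is correct and is exactly the argument the paper intends. The paper omits this proof as ``similar to that of Theorem~\ref{thm:main}'', and you carry that proof over to hypergraphs step for step: weights $w_k\propto(\sum_{e\in E_k}\sigma_e^2)^{1/2}$, a per-class MGF bound from independence of disjoint hyperedges, Cauchy--Schwarz, then Chernoff plus Jensen with $\lambda=t/(K\sum_e\sigma_e^2)$. Reading $G$ as $H$ and discarding empty colour classes (where $w_k=0$ would leave $Y_k$ undefined) are small points of care that the paper's version glosses over.
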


When applying Theorem~\ref{thm:main-order-m}, we choose $K$ to be the chromatic index $\chi'(H)$ or its upper bound.
Below, we consider three examples.  

In the first example, $H$ is a complete hyper-graph. Then, $U=U^{\text{\text{full}}}= \sum_{i_1<i_2<\ldots<i_m} h_e(X_{i_1}, \ldots, X_{i_m})$. 
The chromatic index for complete hyper-graphs has an exact form \citep{baranyai1974factrization}
\[
\chi'(H)=\left\lceil {n\choose m}/\lfloor n/m\rfloor \right\rceil.
\]
Let $\sigma^2=\frac{1}{{n\choose m}}\sum_{i_1< i_2<\ldots< i_m}\mathrm{Var}(h_e(X_{i_1}, \ldots, X_{i_m}))$. It follows by Theorem~\ref{thm:main-order-m} that for all $t>0$, 
\beq \label{hypergraph-example-1}
\mathbb{P}\biggl(\frac{U^{\text{full}}-\mathbb{E}U^{\text{full}}}{{n\choose m}}>t\biggr)\leq \exp\biggl\{ - \frac{\lfloor n/m\rfloor t^2}{2\sigma^2}\biggr\}. 
\eeq
We note that $U^{\text{full}}$ is a generalization of the classical U-statistics allowing non-identically distributed $X_i$ and heterogeneous and asymmtric kernels $h_e$.

In the second example, $H$ is an $r$-regular hypergraph, where $r$ is such that $N = nr/m$ is an integer.  By definition of $r$-regular hypergraphs,  each node of $H$ belongs to exactly $r$ hyper-edges. In this case, the total number of hyper-edges is equal to $N$, and the chromatic index satisfies that 
\[
\chi'(H)\leq m(r-1)+1.
\] 
Let $\sigma^2=\frac{1}{N}\sum_{e\in E}\mathrm{Var}(h_e(X_{i_1}, \ldots, X_{i_m}))$. It is implied by Theorem~\ref{thm:main-order-m} that 
\beq \label{hypergraph-example-2}
\mathbb{P}\biggl(\frac{U-\mathbb{E}U}{N}>t\biggr)\leq \exp\biggl\{ - \frac{n t^2}{2m^2(1-\frac{m-1}{mr})\sigma^2}\biggr\}.
\eeq

In the third example, we consider an arbitrary hypergraph. For each node $i$, let $d_i$ be its degree, which is defined as the total number of edges containing node $i$. Let $d_{\max}$ be the maximum node degree. Two hyper-edges are said to be {\it adjacent} if they share at least one common node. We consider the following edge-coloring algorithm. First, list all hyper-edges in an arbitrary order. Then, process them sequentially. When coloring a hyper-edge $e$, exclude the colors already assigned to all previously colored hyper-edges adjacent to $e$, and assign $e$ any remaining color. Clearly, if every hyper-edge is adjacent to at most $K_0$ hyper-edges, then the algorithm succeeds using $K = K_0+1$ colors. By definition, each node of a hyper-edge $e$ belongs to at most $d_{\max}-1$ other hyper-edges. Since $e$ contains $m$ nodes, it is adjacent to at most $K_0=m(d_{\max}-1)$ hyper-edges. Consequently, 
\[
\chi'(H)\le m(d_{\max}-1)+1.
\]
This universal upper bound of $\chi'(H)$ can be combined with Theorem~\ref{thm:main-order-m} to obtain a concentration inequality applicable to any hypergraph $H$. 
For example, when hyper-edges are randomly generated with probability $q_n$, the maximum degree is of the order $n^{m-1} q_n$. Then, there exists a constant $C_m>0$ such that for all sufficiently large $n$ and all $t>0$, 
\beq \label{hypergraph-example-3}
\mathbb{P}(U-\mathbb{E}U>t)\leq \exp\biggl\{ - \frac{t^2}{C_m n^{m-1} q_n\sum_{e\in E}\sigma_e^2}\biggr\}. 
\eeq

In discrete and combinatorial mathematics, there are results about the chromatic index of certain types of hypergraphs. For instance, when the hypergraph is linear (i.e., any two hyper-edges share at most one common node), it is conjectured \citep{berge1989chromatic,furedi1986chromatic} and partially proved that $\chi'(H)\leq d_{\max}+1$. Users are free to plug into Theorem~\ref{thm:main-order-m} any upper bound of $\chi'(H)$ tailored to the hypergraphs arising in their applications. This flexibility provides a convenient and practical approach to deriving concentration inequalities for these kinds of statistics.

\subsection{Incomplete U-statistics on doubly indexed random vectors} \label{subsec:doubleIndex}
Consider a collection of random vectors $X_{i\ell}\in\mathbb{R}^p$, for $1\leq i\leq n$ and $1\leq \ell\leq L_i$, which are independent but not necessarily identically distributed. Let $G=(V,E)$ be an undirected graph on $\{1,2,\ldots,n\}$, and let $h_e(\cdot, \cdot)$ denote a bivariate kernel, for $e\in E$. For each $1\leq i\leq n$, let $G_i=(V_i, E_i)$ be an undirected graph on $\{1,2,\ldots, L_i\}$, and let $h_{i,e}(\cdot, \cdot)$ denote a bivariate kernel indexed by $i$ and $e\in E_i$. Define
\beq \label{our-U-stat-double}
U_1=\sum_{e=(i,j)\in E}\sum_{\ell=1}^{L_i}\sum_{k=1}^{L_j}h_e(X_{i\ell}, X_{jk}), \qquad U_2=\sum_{i=1}^n \sum_{e=(\ell,k)\in E_i} h_{i, e}(X_{i\ell}, X_{ik}). 
\eeq
They extend the statistic in \eqref{our-U-stat} to incorporate doubly indexed random vectors.  

A motivating application of $U_1$ arises in the Bradley-Terry model with covariates \citep{turner2012bradley}. Suppose there are $n$ entities (e.g., journals), and let $G$ be a fixed comparison graph. For each pair $(i,j)$, entity $i$ has $L_i$ samples (e.g., papers), entity $j$ has $L_j$ samples, and  $h_e(X_{i\ell}, X_{jk})$ represents the interaction between a sample from entity $i$ and a sample of from entity $j$ (e.g., a citation between one paper to another).  
For $U_2$, a motivating application arises in the Poisson-process topic model \citep{austern2025poisson}. Here, each $1\leq i\leq n$ corresponds to a document,  $L_i$ is the document length, and $X_{i1}, \ldots, X_{iL_i}$ are the word embeddings. The function $h_e(X_{i\ell}, X_{ik})$ can then represent the attention between two words within the document.

Theorem~\ref{thm:main} can be extended to these settings. Note that in the proof of this theorem, we obtain not only a tail probability bound but also an upper bound for the moment generating function: $\mathbb{E}[e^{\lambda(U-\mathbb{E}U)}]\leq \exp\bigl\{\frac{\lambda^2K}{2}\sum_{e\in E}\sigma_e^2\bigr\}$. 
Using this bound, we can easily obtain the following corollaries: 

\begin{cor} \label{thm:doubleIndex-1}
Suppose $\mathbb{E}\bigl[e^{\lambda \{h_e(X_{i\ell}, X_{jk})-\mathbb{E}h_e(X_{i\ell}, X_{jk})\}}\bigr]\leq e^{\lambda^2\sigma_e^2/2}$, for all $\lambda>0$. Let $d_{\max}$ denote the maximum node degree of $G$. Then, for all $t>0$, 
\[
\mathbb{P}(U_1-\mathbb{E}U_1>t)\leq \exp\biggl\{ - \frac{t^2}{2(d_{\max}+1)\sum_{e=(i,j)\in E}L_iL_j(L_i\vee L_j)\sigma_e^2}\biggr\}. 
\]
\end{cor}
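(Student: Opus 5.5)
The plan is a two-level edge-coloring argument: an outer coloring of $G$ and an inner coloring of a complete bipartite graph for each edge of $G$. It uses the moment generating function bound obtained inside the proof of Theorem~\ref{thm:main}, namely $\mathbb{E}[e^{\lambda(U-\mathbb{E}U)}]\leq \exp\{\frac{\lambda^2K}{2}\sum_{e}\sigma_e^2\}$.

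That proof only uses two facts. First, $U$ splits along a proper $K$-edge-coloring into $K$ groups. Second, within each group the summands are independent with sub-Gaussian MGFs of variance proxies $\sigma_e^2$. I will therefore treat each block
\[
S_e:=\sum_{\ell=1}^{L_i}\sum_{k=1}^{L_j}h_e(X_{i\ell},X_{jk}),\qquad e=(i,j)\in E,
\]
as a single "edge variable" and verify that it is sub-Gaussian with proxy $\tau_e^2:=L_iL_j(L_i\vee L_j)\sigma_e^2$.

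\textbf{Step 1 (inner level).} Fix $e=(i,j)$. View $S_e$ as a statistic of the form \eqref{our-U-stat} on the complete bipartite graph $K_{L_i,L_j}$. Its vertex set is $\{(i,\ell)\}_{\ell\le L_i}\cup\{(j,k)\}_{k\le L_j}$, and its edge set has $L_iL_j$ edges, each with kernel $h_e$ and proxy $\sigma_e^2$. The underlying vectors $X_{i\ell},X_{jk}$ are independent. By König's edge-coloring theorem, a bipartite graph has chromatic index equal to its maximum degree. Hence $K_{L_i,L_j}$ admits a proper $(L_i\vee L_j)$-edge-coloring. (If one prefers to avoid König, Vizing gives $L_i\vee L_j+1$, but that would not match the stated constant.) Applying the MGF bound from the proof of Theorem~\ref{thm:main} then yields, for all $\lambda>0$,
\[
\mathbb{E}\bigl[e^{\lambda(S_e-\mathbb{E}S_e)}\bigr]\leq \exp\Bigl\{\tfrac{\lambda^2}{2}(L_i\vee L_j)L_iL_j\sigma_e^2\Bigr\}=e^{\lambda^2\tau_e^2/2}.
\]
The case $i=j$ does not arise, since $G$ has no self-loops.

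\textbf{Step 2 (outer level).} Take a proper $(d_{\max}+1)$-edge-coloring $E=\cup_{k}E_k$ of $G$, which exists by Vizing's theorem \eqref{Vizing}. Edges in the same $E_k$ share no node of $G$. The blocks $S_e$, $e\in E_k$, therefore depend on disjoint collections $\{X_{i\ell}\}_\ell\cup\{X_{j k}\}_k$, so they are independent. Now $U_1=\sum_{e\in E}S_e$. I then repeat the proof of Theorem~\ref{thm:main} verbatim with $h_e(X_i,X_j)$ replaced by $S_e$ and $\sigma_e^2$ replaced by $\tau_e^2$. The steps are: write $U_1=\sum_k w_kY_k$ with weights $w_k\propto(\sum_{e\in E_k}\tau_e^2)^{1/2}$; factor the MGF of each $Y_k$ over the independent blocks using Step 1; apply Cauchy–Schwarz; and apply Jensen's inequality. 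This gives
\[
\mathbb{E}\bigl[e^{\lambda(U_1-\mathbb{E}U_1)}\bigr]\leq \exp\Bigl\{\tfrac{\lambda^2(d_{\max}+1)}{2}\sum_{e\in E}\tau_e^2\Bigr\}.
\]
The Chernoff bound with $\lambda=t/\{(d_{\max}+1)\sum_e\tau_e^2\}$ then gives the stated tail bound.

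The only step needing care is Step 1. It must use the optimal bipartite chromatic index $L_i\vee L_j$, and one must check that the proof of Theorem~\ref{thm:main} applies to an abstract sum of independent sub-Gaussian blocks rather than literally to kernel evaluations. That proof only uses independence within color classes and the per-summand MGF bounds, so this should go through directly.
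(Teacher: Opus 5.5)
Your proposal is correct and follows essentially the same route as the paper. The paper likewise stacks each entity's vectors, treats each block as a U-statistic on the complete bipartite graph with chromatic index $L_i\vee L_j$ to obtain the proxy $L_iL_j(L_i\vee L_j)\sigma_e^2$, and then applies Theorem~\ref{thm:main} with $K=d_{\max}+1$ from Vizing's theorem.
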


\begin{proof}
Let ${\boldsymbol X}_i\in\mathbb{R}^{pL_i}$ be the vector by stacking $X_{i1}, X_{i2}, \ldots, X_{iL_i}$ together. Then, we can re-write 
\[
U_1 = \sum_{e=(i,j)\in E}\tilde{h}_e({\boldsymbol X}_i, {\boldsymbol X}_j), \qquad \mbox{with}\quad \tilde{h}_e({\boldsymbol X}_i, {\boldsymbol X}_j):= \sum_{\ell=1}^{L_i}\sum_{k=1}^{L_j}h_e(X_{i\ell}, X_{jk}). 
\]
We first study $\tilde{h}_e({\boldsymbol X}_i, {\boldsymbol X}_j)$ for a fixed $e=(i,j)$. Consider a complete bipartite graph $G_{ij}$,  which has $L_i$ type-1 nodes and $L_j$ type-2 nodes, as well as an edge between each pair of type-1 node and type-2 node. Then, $\tilde{h}_e({\boldsymbol X}_i, {\boldsymbol X}_j)=\sum_{(\ell, k)\in G_{ij}}h_e(X_{i\ell}, X_{jk})$, which is also a U-statistic. Following similar steps in the proof of Theorem~\ref{thm:main}, we obtain that, when $G_{ij}$ has a proper $K$-edge coloring, for all $\lambda >0$, 
\[
\mathbb{E}\Bigl[e^{\lambda\{ \tilde{h}_e({\boldsymbol X}_i, {\boldsymbol X}_j) - \mathbb{E}\tilde{h}_e({\boldsymbol X}_i, {\boldsymbol X}_j) \}}\Bigr]\leq \exp\biggl\{\frac{\lambda^2K}{2}\sum_{(\ell,k)\in G_{ij}}\sigma_e^2\biggr\}= \exp\biggl\{\frac{\lambda^2KL_iL_j\sigma_e^2}{2}\biggr\}. 
\]
The chromatic index for the complete bipartite graph $G_{ij}$ is $\max\{L_i, L_j\}$, so we can take $K=\max\{L_i, L_j\}$ in the above inequality. It implies that $\tilde{h}_e({\boldsymbol X}_i, {\boldsymbol X}_j)$ satisfies \eqref{main-condition} with $\tilde{\sigma}_e^2 = L_iL_j(L_i\vee L_j)\sigma_e^2$. Then, we can apply Theorem~\ref{thm:main} to $\sum_{e=(i,j)\in E}\tilde{h}_e({\boldsymbol X}_i, {\boldsymbol X}_j)$ to obtain the claim. 
\end{proof}

\begin{cor} \label{thm:doubleIndex-2}
Suppose $\mathbb{E}\bigl[e^{\lambda \{h_{i,e}(X_{i\ell}, X_{ik})-\mathbb{E}h_{i,e}(X_{i\ell}, X_{ik})\}}\bigr]\leq e^{\lambda^2\sigma_{i,e}^2/2}$, for all $\lambda>0$. Let $d_{i,\max}$ denote the maximum node degree of $G_i$. Then, for all $t>0$, 
\[
\mathbb{P}(U_2-\mathbb{E}U_2>t)\leq \exp\biggl\{ - \frac{t^2}{2\sum_{i=1}^n(d_{i,\max}+1)\sum_{e\in E_i}\sigma_{i,e}^2}\biggr\}. 
\]
\end{cor}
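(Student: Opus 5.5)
Since $U_2$ splits into blocks indexed by $i$ that use disjoint sets of random vectors, the plan is to bound each block's moment generating function with Theorem~\ref{thm:main} and then multiply, using independence across blocks.

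Write $U_2=\sum_{i=1}^n W_i$ with
\[
W_i:=\sum_{e=(\ell,k)\in E_i}h_{i,e}(X_{i\ell},X_{ik}).
\]
Each $W_i$ is a function only of $X_{i1},\ldots,X_{iL_i}$. Since all the $X_{i\ell}$ are independent, the variables $W_1,\ldots,W_n$ are mutually independent. Moreover, $W_i$ is exactly a statistic of the form \eqref{our-U-stat} over the graph $G_i$ on $\{1,\ldots,L_i\}$, with independent vectors $X_{i1},\ldots,X_{iL_i}$ and kernels $h_{i,e}$.

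For each fixed $i$, I would apply the moment generating function bound established in the proof of Theorem~\ref{thm:main}, as recalled just before Corollary~\ref{thm:doubleIndex-1}. By Vizing's theorem \eqref{Vizing}, $G_i$ admits a proper $(d_{i,\max}+1)$-edge-coloring. The hypothesis of the corollary is precisely \eqref{main-condition} for the kernels $h_{i,e}$ with parameters $\sigma_{i,e}$. Hence, for all $\lambda>0$,
\[
\mathbb{E}\bigl[e^{\lambda(W_i-\mathbb{E}W_i)}\bigr]\leq \exp\Bigl\{\frac{\lambda^2}{2}(d_{i,\max}+1)\sum_{e\in E_i}\sigma_{i,e}^2\Bigr\}.
\]
The only point to check is that the chain of inequalities in that proof (coloring, Jensen, Cauchy--Schwarz) bounds $\mathbb{E}[e^{\lambda(U-\mathbb{E}U)}]$ itself and not merely the tail probability. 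It does, since the Chernoff step is applied only at the very end. If $E_i$ is empty, then $W_i$ is constant and the bound holds trivially.

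By independence of the $W_i$, the moment generating function of $U_2-\mathbb{E}U_2$ factorizes, giving
\[
\mathbb{E}\bigl[e^{\lambda(U_2-\mathbb{E}U_2)}\bigr]\leq e^{\lambda^2 V/2},\qquad V:=\sum_{i=1}^n(d_{i,\max}+1)\sum_{e\in E_i}\sigma_{i,e}^2.
\]
The Chernoff bound then gives $\mathbb{P}(U_2-\mathbb{E}U_2>t)\leq e^{-\lambda t+\lambda^2V/2}$. Choosing $\lambda=t/V$ yields $\exp\{-t^2/(2V)\}$, which is the claimed bound.

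None of the steps is a serious obstacle. The main point is to notice that the per-document bounds should be combined through independence across documents, not through a single global edge coloring. A global coloring would force a common factor $\max_i(d_{i,\max}+1)$, whereas the statement keeps a separate factor $(d_{i,\max}+1)$ inside the sum over $i$.
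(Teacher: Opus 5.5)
Your proposal is correct and matches the paper's proof step for step: split $U_2$ into the independent per-document blocks, bound each block's moment generating function by the argument of Theorem~\ref{thm:main} with $K=d_{i,\max}+1$ from Vizing's theorem, multiply across blocks, and finish with the Chernoff bound at $\lambda=t/V$. Your explicit remarks on independence across blocks, on empty $E_i$, and on why a single global coloring would give a weaker bound are sound points that the paper leaves implicit.
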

\begin{proof}
Write $U_2=\sum_{i=1}^n V_i$, where $V_i=\sum_{e=(\ell,k)\in E_i}h_{i,e}(X_{i\ell}, X_{ik})$. Each $V_i$ has the same form as in \eqref{our-U-stat}; and by \eqref{Vizing}, $G_i$ has a proper $K$-edge-coloring for $K=d_{i,\max}+1$. As in the proof of Theorem~\ref{thm:main}, we can show that 
\[
\mathbb{E}\bigl[e^{\lambda(V_i-\mathbb{E}V_i)}\bigr]\leq \exp\biggl\{\frac{\lambda^2(d_{i,\max}+1)}{2} \sum_{e\in E_i}\sigma_{i,e}^2\biggr\}.
\] 
Using the Chernoff bound, $\mathbb{P}(U_2-\mathbb{E}U_2>t)\leq e^{-\lambda t}\prod_{i=1}^n \mathbb{E}[e^{\lambda (V_i-\mathbb{E}V_i)}]$. The claim follows by plugging in the above inequality and optimizing over $\lambda$. 
\end{proof}

\section{Discussion} \label{sec:Discuss}
The main contribution of this work is a delicate connection between graph coloring and concentration inequalities of U-statistics. It 
allows us to handle an arbitrary sampling graph, while the classical Hoeffding decomposition cannot deal with this case. 

One limitation of this work is that the variance quantity appearing in our concentration inequalities is the marginal variance, $\mathrm{Var}(h_e(X_i,X_j))$, which may not be optimal. For example, in the classical U-statistic setting, where $G$ is the complete graph and $h_e=h$ for all edges, \cite{hoeffding1948class} showed that the natural variance quantity is instead $\mathrm{Var}(\mathbb{E}[h(X_i,X_j)| X_i])$, which is generally smaller. More recently, \cite{maurer2022exponential} established a Bernstein-type inequality for U-statistics associated with an arbitrary graph $G$, in which the variance term is also based on $\mathrm{Var}(\mathbb{E}[h(X_i,X_j)| X_i])$.


In many applications, these two variance quantities differ only by a constant factor and therefore lead to the same asymptotic behavior. Moreover, although the above results are sharper in certain special settings---for example, when the kernel is uniformly bounded and homogeneous---their proofs rely heavily on these assumptions and are hard to extend to more general scenarios. In contrast, our graph-coloring approach is more flexible: it readily accommodates unbounded kernels and arbitrary sampling graphs, while yielding concise and transparent proofs.


We also note that our results are intended for non-degenerate U-statistics, that is, 
$\mathbb{E}[h_e(X_i,X_j)| X_i]\neq 0$ or $
\mathbb{E}[h_e(X_i,X_j)| X_j]\neq 0$. 
For classical degenerate U-statistics, where $G$ is the complete graph and the conditional expectations vanish, decoupling inequalities \citep{de2012decoupling} are the standard analytical tool. Extending our graph-coloring proof idea to degenerate U-statistics associated with arbitrary sampling graphs remains an interesting direction for future work.

\bibliography{Ustat}
\bibliographystyle{chicago}

\end{document}